\documentclass[11pt]{article}
\date{\vspace{-5ex}}
\usepackage{amsmath}
\usepackage{amsfonts}
\usepackage{amssymb}
\usepackage{amsthm}
\usepackage{breqn}
\usepackage{setspace}
\usepackage{fullpage}
\usepackage{enumitem}
\usepackage{bbold} 
\usepackage{comment}
\usepackage{hyperref}
\usepackage{bbm}
\usepackage{tikz}
\usepackage{enumitem}
\usepackage{mathtools} 
\usetikzlibrary{patterns,arrows,decorations.pathreplacing}

\bibliographystyle{plain}

\newtheorem{theorem}{Theorem} 
\newtheorem{lemma}[theorem]{Lemma}
 
\newtheorem{definition}{Definition}
\newtheorem{claim}{Claim}

\usepackage{authblk}

\usepackage{authblk}

\setlength{\affilsep}{2em}

\begin{document}
\title{Wiener Index of Quadrangulation Graphs}

\author[1,2]{Ervin Gy\H{o}ri} 
\author[2]{Addisu Paulos}
\author[2]{Chuanqi Xiao}
\affil[1]{Alfr\'ed R\'enyi Institute of Mathematics, Budapest\par
\texttt{gyori.ervin@renyi.mta.hu}}
\affil[2]{Central European University, Budapest\par
\texttt{addisu_2004@yahoo.com, chuanqixm@gmail.com}}
\maketitle
\begin{abstract}
The Wiener index of a graph $G$, denoted $W(G)$, is the sum of the distances between all pairs of vertices in $G$. \' E. Czabarka, et al. conjectured that for an $n$-vertex, $n\geq 4$, simple quadrangulation graph $G$ 
\begin{equation*}W(G)\leq
\begin{cases}
\frac{1}{12}n^3+\frac{7}{6}n-2, &\text{ $n\equiv 0~(mod \ 2)$,}\\
\frac{1}{12}n^3+\frac{11}{12}n-1, &\text{ $n\equiv 1~(mod \ 2)$}.
\end{cases}
\end{equation*}
In this paper, we confirm this conjecture. 
\end{abstract}
{\bf Keywords}\ \ Wiener index, Quadrangulation graphs, Separating 4-cycle
\section{Introduction}
All graphs considered in this paper are finite, simple and connected. Let $G$ be a graph, then the vertex and edge sets of $G$ are denoted by $V(G)$ and $E(G)$ respectively. The Wiener index of the graph $G$ is denoted by $W(G)$ and is defined as,
\begin{equation*}
    W(G) = \sum_{\{u,v\} \subseteq V(G)} d_G(u,v),
\end{equation*} 
where $d_G(u,v)$ is the distance between the vertices $u$ and $v$ in the graph $G$.\\
Wiener index was first introduced by H. Wiener in 1947, while studying its correlations with boiling points of paraffin considering its molecular structure  \cite{1}. Since then, it has been one of the most frequently used topological indices in chemistry, as molecular structures are usually modelled as undirected graphs.\\
Obtaining sharp and asymptotically sharp bounds  and characterizing extremal structures are among wide varieties of previous and ongoing studies related to Wiener index.
The most basic upper bound of $W(G)$ states that, if $G$ is a connected graph of order $n$,
then
\begin{align*}
    W(G) \leq \frac{(n-1)n(n+1)}{6}
\end{align*} 
which is attained only when $G$ is a path \cite{6,7}. Many sharp or asymptotically sharp bounds on $W(G)$ in terms of other graph parameters are known, for instance, minimum degree \cite{8,9,10}, connectivity \cite{11,12}, edge-connectivity \cite{13,14} and maximum degree \cite{15}.\\
\' E. Czabarka, et al. \cite{17}, gave an asymptotic upper bound for the Wiener index of triangulation and quadrangulation graphs. Moreover, based on their constructions they proposed upper bound conjectures for the Wiener index of triangulation and quadrangulation graphs. Recently, D. Ghosh et al. \cite{22} confirmed their conjecture in the case of triangulation graphs. 

For quadrangulation graphs, \' E. Czabarka, et al. proved the following asymptotic upper bound. 
\begin{theorem}\cite{17}
Let $\kappa=\{2,3\}$, then there exist a constant $C$ such that
$$W(G)\leq \frac{1}{6\kappa}n^3+Cn^{\frac{5}{2}},$$
for every $\kappa$-connected simple quadrangulation G of order n.
\end{theorem}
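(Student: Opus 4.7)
The plan is to root a BFS at a well-chosen vertex, convert $\kappa$-connectivity into a lower bound on BFS-layer sizes, and then estimate $W(G)$ by summing distances across all layer-pairs.

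First I would pick $v^* \in V(G)$ of minimum eccentricity $r := \mathrm{rad}(G)$, and let $L_0 = \{v^*\}, L_1, \ldots, L_r$ denote the BFS layers from $v^*$. Since $G$ is $\kappa$-connected, each intermediate layer $L_i$ ($1 \leq i \leq r-1$) is a vertex cut separating $L_0\cup\cdots\cup L_{i-1}$ from $L_{i+1}\cup\cdots\cup L_r$, so $|L_i|\geq \kappa$. Summing layer sizes gives $r \leq (n-2)/\kappa + 1$, hence $r = n/\kappa + O(1)$.

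Next, split the Wiener sum according to the layer indices of the endpoints:
\[
2W(G) \;=\; \sum_{0 \leq i,j \leq r}\; \sum_{u\in L_i,\,w\in L_j} d_G(u,w).
\]
For $u\in L_i$ and $w\in L_j$ one has the easy bounds $|i-j| \leq d_G(u,w) \leq i+j$; the upper bound $i+j$ is too crude for the sharp constant, so the key lemma to establish is the refined estimate $d_G(u,w) \leq |i-j| + O(\sqrt n)$. Assuming this, the double sum splits as $\sum_{i,j}|L_i||L_j|\,|i-j|$ plus an error of $O(\sqrt n)\cdot n^2 = O(n^{5/2})$. Subject to $\sum_i|L_i|=n$, $|L_i|\geq\kappa$ for $1\leq i\leq r-1$, and $r \leq n/\kappa+O(1)$, the main term is maximised (up to lower-order terms) when every layer has size exactly $\kappa$ and $r=n/\kappa$, giving
\[
\sum_{i<j}|L_i||L_j|(j-i) \;\approx\; \kappa^2\binom{r+2}{3} \;\approx\; \frac{\kappa^2 r^3}{6} \;\approx\; \frac{n^3}{6\kappa},
\]
and hence $W(G) \leq \frac{1}{6\kappa}n^3 + O(n^{5/2})$, which is the asserted bound.

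The main obstacle is the lateral-distance lemma $d_G(u,w)\leq|i-j|+O(\sqrt n)$. This is an isoperimetric statement specific to $\kappa$-connected quadrangulations: the annulus $L_{i-1}\cup L_i\cup L_{i+1}$ is a planar subgraph of a quadrangulation (so it satisfies Euler's relation $|E|=2|V|-4$ and has all faces bounded by $4$-cycles), and one must show that any two vertices in $L_i$ can be joined by a path of length $O(\sqrt n)$ inside this annulus. I would prove it by a face-counting argument via Euler's formula, treating separately the cases $\kappa=2$ (whose extremal example is a ladder) and $\kappa=3$ (whose extremal example is $P_m\times C_3$); in both of these examples the asymptotic $\frac{1}{6\kappa}n^3$ is attained, confirming that the constant cannot be improved.
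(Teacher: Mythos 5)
First, a remark on scope: this paper does not prove the statement at all --- Theorem 1 is imported verbatim from \cite{17} and used as background, so there is no proof here to compare yours against. Judged on its own, your skeleton (BFS from a central vertex, $|L_i|\geq\kappa$ for every intermediate layer because each such layer is a vertex cut, hence $r\leq n/\kappa+O(1)$, then split $W(G)$ over layer pairs) is the standard and essentially correct frame for a bound of this shape, and the arithmetic $\kappa^2\binom{r+2}{3}\approx n^3/(6\kappa)$ checks out.

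However, there is a genuine gap, and it sits exactly where you placed it: the ``lateral-distance lemma'' $d_G(u,w)\leq|i-j|+O(\sqrt n)$ carries the entire difficulty of the theorem, and you do not prove it --- you only assert that a ``face-counting argument via Euler's formula'' on the annulus $L_{i-1}\cup L_i\cup L_{i+1}$ will work. As stated, that sketch is wrong: a subgraph of a quadrangulation induced by three consecutive BFS layers is just some planar graph; it need not have all faces of length $4$ and need not satisfy $|E|=2|V|-4$, so the relation you invoke does not apply to it. The true source of the $\sqrt n$ is an isoperimetric tradeoff (a pair $u,w$ in the same layer at lateral distance $D$ forces either $\Omega(D)$ distinct layers or $\Omega(D)$ vertices per layer along a geodesic, whence $D=O(\sqrt n)$), and making that rigorous for an arbitrary $\kappa$-connected quadrangulation --- or replacing the per-pair bound by an averaged one, which is what is actually needed --- is the real content of the theorem; without it the argument only yields the trivial bound $d_G(u,w)\leq i+j$ and hence a constant worse than $\frac{1}{6\kappa}$. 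Two further, smaller issues: the claim that $\sum_{i<j}|L_i||L_j|(j-i)$ is maximised when every layer has size exactly $\kappa$ is plausible and true up to lower-order terms, but it is a nontrivial constrained optimisation (na\"ive term-by-term bounds on $\sum_k N_k(n-N_k)$ with $N_k=\sum_{\ell\leq k}|L_\ell|$ lose a constant factor) and needs a short proof; and $P_m\times C_3$ is not a quadrangulation (it is non-bipartite and has triangular faces), so it cannot serve as your extremal example for $\kappa=3$ --- though tightness is not needed for the upper bound anyway.
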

In the same paper \cite{17}, based on the following constructions $Q_n$, see Figure \ref{qqq}, they conjectured that $W(Q_n)$ is the upper bound for the Wiener index of quadrangulation graphs.
\begin{equation*}W(Q_n)=
\begin{cases}
\frac{1}{12}n^3+\frac{7}{6}n-2, &\text{ $n\equiv 0~(mod \ 2)$,}\\
\frac{1}{12}n^3+\frac{11}{12}n-1, &\text{ $n\equiv 1~(mod \ 2)$}.
\end{cases}
\end{equation*}
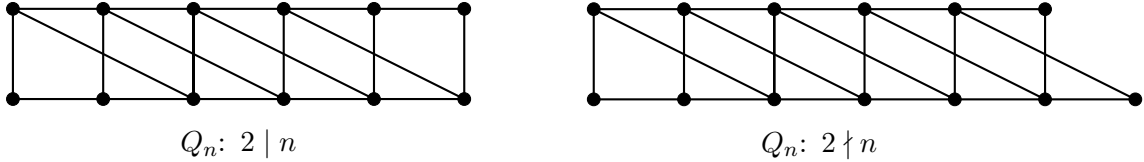
\begin{figure}[h]
\centering
\begin{tikzpicture}[scale=0.3]
\draw[fill=black](0,0)circle(8pt);
\draw[fill=black](4,0)circle(8pt);
\draw[fill=black](8,0)circle(8pt);
\draw[fill=black](12,0)circle(8pt);
\draw[fill=black](16,0)circle(8pt);
\draw[fill=black](20,0)circle(8pt);
\draw[fill=black](0,4)circle(8pt);
\draw[fill=black](4,4)circle(8pt);
\draw[fill=black](8,4)circle(8pt);
\draw[fill=black](12,4)circle(8pt);
\draw[fill=black](16,4)circle(8pt);
\draw[fill=black](20,4)circle(8pt);
\draw[thick](0,0)--(4,0)--(8,0)--(12,0)--(16,0)--(20,0);
\draw[thick](0,4)--(4,4)--(8,4)--(12,4)--(16,4)--(20,4);
\draw[thick](0,0)--(0,4)(8,0)--(8,4)(16,0)--(16,4)(4,0)--(4,4)(8,0)--(8,4)(12,0)--(12,4)(20,0)--(20,4);
\draw[thick](0,4)--(8,0);
\draw[thick](4,4)--(12,0);
\draw[thick](8,4)--(16,0);
\draw[thick](12,4)--(20,0);
\node at (10,-2){$Q_n\colon \ 2\mid n$};
\end{tikzpicture}\quad\quad\quad\quad
\begin{tikzpicture}[scale=0.3]
\draw[fill=black](0,0)circle(8pt);
\draw[fill=black](4,0)circle(8pt);
\draw[fill=black](8,0)circle(8pt);
\draw[fill=black](12,0)circle(8pt);
\draw[fill=black](16,0)circle(8pt);
\draw[fill=black](20,0)circle(8pt);
\draw[fill=black](24,0)circle(8pt);
\draw[fill=black](0,4)circle(8pt);
\draw[fill=black](4,4)circle(8pt);
\draw[fill=black](8,4)circle(8pt);
\draw[fill=black](12,4)circle(8pt);
\draw[fill=black](16,4)circle(8pt);
\draw[fill=black](20,4)circle(8pt);
\draw[thick](0,0)--(4,0)--(8,0)--(12,0)--(16,0)--(20,0)--(24,0);
\draw[thick](0,4)--(4,4)--(8,4)--(12,4)--(16,4)--(20,4);
\draw[thick](0,0)--(0,4)(8,0)--(8,4)(16,0)--(16,4)(4,0)--(4,4)(8,0)--(8,4)(12,0)--(12,4)(20,0)--(20,4);
\draw[thick](0,4)--(8,0);
\draw[thick](4,4)--(12,0);
\draw[thick](8,4)--(16,0);
\draw[thick](12,4)--(20,0);
\draw[thick](16,4)--(24,0);
\node at (10,-2){$Q_n\colon \ 2\nmid n$};
\end{tikzpicture}
\caption{Quadrangulations maximizing the Wiener index.}
\label{qqq}
\end{figure}

In this paper, we confirm their conjecture and have the following main result.
\begin{theorem}\label{main}
Let $G$ be a quadrangulation graph with $n\geq 4$ vertices. Then 
\begin{equation*}W(G)\leq
\begin{cases}
\frac{1}{12}n^3+\frac{7}{6}n-2, &\text{ $n\equiv 0~(mod \ 2)$,}\\
\frac{1}{12}n^3+\frac{11}{12}n-1, &\text{ $n\equiv 1~(mod \ 2)$}.
\end{cases}
\end{equation*}
\end{theorem}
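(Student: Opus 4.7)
My plan is strong induction on $n$, with base cases $4 \leq n \leq n_0$ checked directly for a small constant $n_0$. The key dichotomy in the inductive step, echoing the paper's keyword, is whether $G$ contains a separating $4$-cycle. If it does, I reduce to two smaller quadrangulations and invoke induction. If it does not, then a degree-$2$ vertex would force a separating $4$-cycle around its two $4$-faces, so $G$ has minimum degree at least $3$ and is therefore $3$-connected; Theorem 1 with $\kappa = 3$ then gives $W(G) \leq \tfrac{1}{18}n^3 + Cn^{5/2}$, which is strictly less than the target $\tfrac{1}{12}n^3 + O(n)$ once $n_0$ is chosen large enough. Hence all substantive work lies in the reducible case.

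\textbf{Reduction along a separating $4$-cycle.} Let $C = v_1 v_2 v_3 v_4$ be separating, bounding nonempty interior $A$ and exterior $B$ with $|A|+|B|=n-4$. Let $G_1 = G[V(C) \cup A]$ and $G_2 = G[V(C) \cup B]$; each is a quadrangulation of order $n_j < n$ with $C$ on the outer face, so the induction hypothesis applies to both. Splitting the pair-sum defining $W(G)$ according to the sides of $C$, and using $d_G(x,y) \leq d_{G_j}(x,y)$ whenever $x,y \in V(G_j)$, one obtains
\[
W(G) \;\leq\; W(G_1) + W(G_2) - S_C + \sum_{u \in A,\, w \in B} d_G(u,w),
\]
where $S_C = \sum_{\{x,y\}\subseteq V(C)} d_G(x,y) \leq 8$ (since $G$ is bipartite). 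Every $u$-$w$ geodesic with $u \in A$, $w \in B$ meets $V(C)$, so
\[
d_G(u,w) \;\leq\; \min_{1\leq i\leq 4}\bigl(d_{G_1}(u,v_i)+d_{G_2}(v_i,w)\bigr).
\]

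\textbf{Main obstacle.} The delicate point is keeping the cross sum tight enough that the total matches $W(Q_n)$: a fixed-$v_i$ bound $d_G(u,w) \leq d_{G_1}(u,v_1)+d_{G_2}(v_1,w)$ loses a linear factor and will not close the induction. I expect one must strengthen the inductive statement with an auxiliary ``eccentric-sum'' bound of the form
\[
\sum_{x \in V(H)} d_H(v, x) \;\leq\; \tfrac{1}{4}m^2 + O(m)
\]
for every vertex $v$ on the outer $4$-face of a quadrangulation $H$ on $m$ vertices, attained by $Q_m$ with $v$ at a corner. Combining the inductive $W$-bound on $G_1, G_2$ with this auxiliary bound applied to each $v_i$, and performing a sub-case analysis in the sizes $(|A|,|B|)$ together with which $v_i$ realizes the minimum, should close the inequality. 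Equality propagates back to $Q_n$ because every separating $4$-cycle of $Q_n$ cuts it into two smaller extremal $Q_{n_j}$'s glued along a boundary face, so the extremal structure is forced uniquely.
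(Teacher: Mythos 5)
Your high-level dichotomy (separating $4$-cycle versus none) matches the skeleton of the paper's Case~2, but two of your three steps have genuine gaps. First, in the irreducible case you invoke Theorem~1 with $\kappa=3$ and ``choose $n_0$ large enough.'' The constant $C$ in Theorem~1 is not effective, so $n_0$ is unspecified and, for any concrete $C$, of order $(36C)^2$; ``base cases $4\le n\le n_0$ checked directly'' is then not something one can actually carry out, and the strong induction genuinely needs all of those cases because the separating-cycle reduction can produce pieces of every order below $n$. (Also, your inference ``minimum degree $3$, therefore $3$-connected'' is false in general; the correct route is Lemma~\ref{cd}: no separating $4$-cycle implies $3$-connected.) The paper instead does real work here: it deletes a degree-$3$ \emph{good vertex} $v$, adds back one of the three missing diagonals to restore a quadrangulation, and proves (Claim~\ref{cl1}) that the total decrease in pairwise distances is at most $(n-1)^2/18$, which together with the status bound of Lemma~\ref{lm9} closes the induction already for $n\ge 15$.

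Second, and more seriously, the quantitative heart of the reducible case is missing---you explicitly defer it (``I expect\dots should close the inequality'')---and it does not close as sketched. With only a $\tfrac14 m^2+O(m)$ status bound on both sides of the cut, the second-order terms leave no room when $|A|\approx|B|$; the paper avoids this by choosing a \emph{minimum} separating $4$-cycle, so that the interior piece is $3$-connected and every non-terminal level from a vertex there has at least $3$ vertices, giving the stronger $\tfrac16 m^2$-type bound (Lemma~\ref{lm9}) on that side. Worse, you fold degree-$2$ vertices into the separating-cycle case, but when the interior of the cut is a single degree-$2$ vertex $v$ your reduction collapses to exactly $W(G)\le W(G-v)+\sigma(v)$, and with the sharp bound $\sigma(v)\le\tfrac14\bigl((n-1)^2+2(n-1)\bigr)$ this overshoots the claimed value by about $n/2$ for every odd $n\ge 7$. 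The paper needs a separate idea precisely at this point: when the second distance level of $v$ contains only two vertices, it contracts the two edges at $v$ to remove two vertices at once (preserving parity) rather than deleting $v$ (Case~1.2.2). Your auxiliary ``eccentric-sum'' bound is essentially the paper's Lemma~\ref{lm1}, but that lemma alone is provably insufficient, so the odd case cannot be completed within your framework without this additional contraction argument.
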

To prove the statement, first we need some notations and preliminaries.
\section{Notations and Preliminaries}
Let $G$ be an $n$-vertex graph and $v\in V(G)$, the degree of $v$ in $G$ is denoted by $d_G(v)$, the set of vertices which are adjacent to $v$ by $N(v)$ and the minimum degree in $G$ by $\delta(G)$. Let $\emptyset\neq S\subset V(G)$, the status of the set $S$ is defined as
\begin{align*}
  \sigma_G(S)=\sum_{u\in V(G)}d_G(S,u), 
\end{align*}
where $d_G(S,u)$ is the distance of $u$ from $S$ in $G$, that is,
\begin{align*}
  d_G(S,u)=\min\{d_G(u,v)|v\in S\}.  
\end{align*}
In particular, if $S=\{v\}$, then the status of $S$ is denoted by $\sigma_G(v)$.
We may write $\sigma(S)$ rather than $\sigma_{G}(S)$, if the underlying graph $G$ is clear. Similarly $d(v)$ in place of $d_G(v)$. For the sake of simplicity, we use the term $k-$cycle instead of cycle of length $k$ and $k-$face instead of face of length $k$. Let $C$ be a cycle in $G$ which is embedded in the plane, the interior (exterior) of $C$ is the bounded (unbounded) part of $C$ excluding $C$.\\
The following Lemmas are needed to complete the proof of our main theorem.
\begin{lemma}
Let $G$ be an $n$-vertex quadrangulation graph, $n\geq 4$, then $\delta(G)$ is either 2 or 3.
\end{lemma}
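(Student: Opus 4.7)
The plan is to deduce both inequalities $\delta(G)\geq 2$ and $\delta(G)\leq 3$ from Euler's formula together with the definition of a quadrangulation. The argument is entirely combinatorial and should fit in a short proof.

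First I would pin down the edge count. Let $m=|E(G)|$ and $f$ denote the number of faces of a fixed plane embedding of $G$. Since every face has length exactly $4$ and each edge lies on the boundary of exactly two faces, double counting edge-face incidences gives $4f=2m$, so $f=m/2$. Substituting into Euler's formula $n-m+f=2$ yields $m=2n-4$.

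For the upper bound, the handshake lemma now gives $\sum_{v\in V(G)}d_G(v)=2m=4n-8$, so the average degree is $4-8/n<4$. Hence some vertex has degree at most $3$, which shows $\delta(G)\leq 3$.

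For the lower bound, I would use that $G$ is simple and every face is a simple $4$-cycle. Since $G$ is connected, no vertex is isolated. If a vertex $v$ had degree exactly $1$, then its unique incident edge $vu$ would appear twice on the boundary walk of the face containing it, forcing that boundary walk to be $v,u,w,u,v$ for some $w$; this is not a simple $4$-cycle, contradicting the definition of a simple quadrangulation. Therefore $\delta(G)\geq 2$. There is no serious obstacle here; the only subtlety is confirming from the definition of ``simple quadrangulation'' that every facial walk is a simple $4$-cycle, which is exactly what is needed to rule out $\delta(G)=1$.
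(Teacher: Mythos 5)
Your proof is correct and follows essentially the same route as the paper: Euler's formula plus the face--edge double count $4f=2e$ gives $e=2n-4$, hence average degree below $4$ and $\delta(G)\leq 3$. The only difference is cosmetic: for $\delta(G)\geq 2$ the paper simply cites that a simple quadrangulation is 2-connected, while you rule out a degree-1 vertex directly from the requirement that each facial walk be a simple 4-cycle; both are valid.
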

\begin{proof}
Since each face is of length 4 and every edge is in two faces, then $4f=2e$. Using the Euler's formula, $e+2=n+f$, we get $e=2n-4$, thus, $\sum_{v\in V(G)}d(v)=4n-8$. Therefore, $\delta(G)\leq 3$. Since simple quadrangulation is 2-connected, there is no vertex of degree 1. Therefore, $\delta(G)$ is either 2 or 3. \end{proof}
\begin{definition}
A separating 4-cycle $S$ in a quadrangulation graph $G$ is a 4-cycle such that the deletion of $S$ from $G$ results in a disconnected graph. In other words, a separating 4-cycle is a 4-cycle which is not the
boundary of a face.
\end{definition}
\begin{lemma}\label{cd}\cite{21}
If $G$ is a quadrangulation with at least 6 vertices and no separating 4-cycle, then $G$ is 3-connected.
\end{lemma}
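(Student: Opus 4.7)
The plan is to argue by contradiction: assume $G$ is a quadrangulation on $n\geq 6$ vertices with no separating 4-cycle that fails to be 3-connected. The proof of the previous lemma invoked the fact that simple quadrangulations are 2-connected, so failing 3-connectedness forces a 2-vertex cut $\{u,v\}$. Let $A$ be one component of $G-\{u,v\}$ and $B$ the union of the remaining components; the goal is to produce a separating 4-cycle, contradicting the hypothesis.

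The key planar observation is that the open faces of the embedding are precisely the connected components of the complement of $G$ in the plane, so any Jordan arc from $u$ to $v$ whose interior is disjoint from $G$ must lie in the closure of a single face. A Jordan curve through $u,v$ (otherwise avoiding $G$) separating $A$ from $B$ therefore singles out two distinct faces $F_1, F_2$, each containing both $u$ and $v$ on its boundary, with $F_1$ on the $A$-side and $F_2$ on the $B$-side. Because each face is a 4-cycle, the boundary of $F_i$ has one of two shapes: the diagonal form $u-x_i-v-y_i-u$, in which $x_i$ and $y_i$ are common neighbors of $u$ and $v$, or the adjacent form $u-v-x_i-y_i-u$, which requires $uv$ to be an edge of $G$. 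In the doubly-diagonal case, one has $x_1,y_1\in A$ and $x_2,y_2\in B$; the 4-cycle $u-x_1-v-x_2-u$ then strictly separates $y_1\in A$ from $y_2\in B$ in the planar embedding, so it is a separating 4-cycle, giving the desired contradiction.

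The remaining case, where at least one $F_i$ is adjacent to the edge $uv$, I would handle by examining the cyclic order of edges around $u$. For two consecutive $u$-neighbors $n, n'$ in that cyclic order with $n\in A$ and $n'\in B$, the intermediate 4-face has boundary $u-n-q-n'-u$; the vertex $q$ must be $v$, since otherwise $q$ lies in $A$ or $B$ and produces an edge crossing the cut. Since $u$ has neighbors in both $A$ and $B$ (as $G$ is 2-connected) while $v$ appears at most once in the cyclic order at $u$, some $A$-to-$B$ transition at $u$ must avoid $v$ and produces a 4-face $u-a-v-b-u$ with $a\in A$ and $b\in B$; applying the same argument at $v$, or at the second cyclic transition at $u$, produces a second common-neighbor 4-face. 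The main obstacle will be the concluding combinatorial step: using these common-neighbor 4-faces together with the bound $n\geq 6$ to produce a 4-cycle of the form $u-a-v-a'-u$ or $u-b-v-b'-u$ that is provably not a face, ruling out the small degenerate configurations (such as $K_{2,4}$) in which all the natural candidate 4-cycles happen to be faces.
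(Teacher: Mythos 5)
The paper offers no proof of this lemma at all --- it is imported from Brinkmann et al.\ \cite{21} --- so your attempt can only be judged on its own terms, and as written it contains both a concrete error and an explicitly admitted gap. The error is in the doubly-diagonal case. If the separating Jordan curve runs from $u$ to $v$ through the interior of a face with boundary $u x_1 v y_1$, the arc splits that boundary cycle into the two paths $u x_1 v$ and $v y_1 u$, one lying on each side of the curve; hence $x_1$ and $y_1$ lie in \emph{different} parts of the cut, not both in $A$ as you assert. With the labels corrected, the candidate cycle $u x_1 v x_2$ you exhibit is not the right one, and the claim that it ``strictly separates $y_1$ from $y_2$'' does not stand as stated. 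The admitted gap is your final sentence: producing a 4-cycle through $u$ and $v$ that is provably not a face, while excluding degenerate configurations such as $K_{2,4}$, is the entire content of the lemma, and you leave it open.

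The good news is that your second paragraph already contains the mechanism that closes every hole. Since $G$ is 2-connected, $u$ has a neighbour in $A$ and in $B$, and since $v$ occurs at most once in the rotation at $u$, at least one $A$-to-$B$ transition there is direct; the intervening 4-face is $u\,a\,v\,b$ because the fourth vertex is a common neighbour of $a\in A$ and $b\in B$ other than $u$, hence equals $v$. But then $u$ and $v$ have a common neighbour, so they lie in the same class of the bipartition (quadrangulations are bipartite) and $uv\notin E(G)$: the ``adjacent form'' never occurs, every block boundary in the rotation at $u$ is a direct transition, and there are at least two of them, yielding distinct faces $F_1=u a_1 v b_1$ and $F_2=u a_2 v b_2$ with $a_i\in A$, $b_i\in B$. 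Since $n\ge 5$, a 4-cycle bounds at most one face, so $a_1\neq a_2$ or $b_1\neq b_2$. If $a_1\neq a_2$, the 4-cycle $u a_1 v a_2$ cannot be a face: it and $F_1$ would both make the edges $a_1u,a_1v$ consecutive at $a_1$, forcing $d(a_1)=2$ and $A=\{a_1\}$, contradicting $a_2\in A$. If instead $a_1=a_2$, the same degree argument gives $A=\{a_1\}$ and $b_1\neq b_2$, and the 4-cycle $u b_1 v b_2$ has $a_1$ on one side and the remaining $n-5\ge 1$ vertices on the other (this is exactly where $n\ge 6$ is needed, and exactly what happens in $K_{2,4}$), so it is not a face. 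Either way a separating 4-cycle appears. Until a finishing argument of this kind is actually written out, the proposal is a plan rather than a proof.
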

Let $G$ be a simple graph and $S$ be a nonempty subset of $V(G)$. Then we can partition vertices in $V(G)$ based on their distance from $S$. We call the set of vertices at the distance $i$ as the  \textit{$i-th$ level} with respect to $S$ and call the far most nonempty level \textit{terminal level} with respect to $S$. We have the following lemmas.
\begin{lemma}\label{lm1}
Let $G$ be a graph on $n+s$ vertices, $S$ be a set of vertices in $G$ such that $|S|=s$ and each of the non terminal levels with respect to $S$ contains at least 2 vertices. Then 
\begin{align*}
\sigma(S)\leq \begin{cases}
\frac{1}{4}(n^2+2n), &\text{ $2\mid n$,}\\
\frac{1}{4}(n^2+2n+1), &\text{ $2\nmid n$.}
\end{cases}
\end{align*}
\end{lemma}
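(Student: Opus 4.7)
The plan is to treat the statement as a pure maximization problem over the admissible sequences of level sizes, using only the two combinatorial constraints given: (i) every non-terminal level beyond level $0$ contains at least two vertices, and (ii) the total number of non-source vertices equals $n$. Write $L_1,\ldots,L_k$ for the levels at distances $1,2,\ldots,k$ from $S$ and set $a_i=|L_i|$. Then $\sum_{i=1}^{k} a_i=n$, $a_i\geq 2$ for $1\leq i\leq k-1$, $a_k\geq 1$, and
$$\sigma(S)=\sum_{i=1}^{k} i\, a_i.$$

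The cleanest route is to rewrite this sum via the tail counts $N_i=\sum_{j\geq i} a_j$, i.e.\ the number of vertices outside $S$ at distance at least $i$ from $S$. Swapping the order of summation gives $\sigma(S)=\sum_{i=1}^{k} N_i$. The hypothesis $a_j\geq 2$ for $j<k$ yields the pointwise bound $N_i\leq n-2(i-1)$ for every $1\leq i\leq k$; combined with $N_k=a_k\geq 1$, it also forces $k\leq \lfloor (n+1)/2\rfloor$. Summing these pointwise bounds gives
$$\sigma(S)\;\leq\;\sum_{i=1}^{k}\bigl(n-2(i-1)\bigr)\;=\;kn-k(k-1).$$

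The right-hand side is a concave quadratic in $k$ whose real maximum is at $k^\ast=(n+1)/2$, so among admissible integer values of $k$ the optimum is $k=n/2$ when $n$ is even and $k=(n+1)/2$ when $n$ is odd. A direct substitution yields $(n^2+2n)/4$ in the first case and $(n+1)^2/4=(n^2+2n+1)/4$ in the second, matching the stated bound exactly.

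The step I expect to require the most care is the parity bookkeeping at the end: one must verify that the selected integer $k$ is admissible (i.e.\ that $n-2(k-1)\geq 1$) and, symmetrically, that increasing $k$ by one violates the constraint, so nothing larger than the stated value can occur. Beyond that the argument is essentially Abel summation once the tail-count rewriting is in place, and it avoids any case analysis on the particular structure of $G$ — which is why this lemma can later be plugged into arbitrary quadrangulations via the appropriate choice of $S$.
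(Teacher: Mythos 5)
Your proof is correct and follows essentially the same route as the paper's: both reduce the problem to maximizing the concave quadratic $kn-k(k-1)$ over the admissible number of levels $k\leq\lfloor (n+1)/2\rfloor$, your tail-count (Abel summation) rewriting being only a cosmetic repackaging of the paper's direct estimate $\sigma(S)\leq (r+1)n-2\sum_{i=1}^{r}i$ with $k=r+1$. The parity bookkeeping you flag is handled identically in the paper and poses no difficulty.
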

\begin{proof}
Let $r$ denote the number of non terminal levels and $x_i$ be the number of vertices in the $i-th$ level for $i=1,2,\dots,r+1$. Thus $\sum_{i=1}^{r+1}x_i=n$. Since the terminal level contains at least 1 vertex and the other levels contain at least 2 vertices, we get that $r\leq \frac{n-1}{2}$ and 
\begin{align*}
\sigma(S)=&\sum_{i=1}^{r+1}ix_i=\sum_{i=1}^{r}ix_i+(r+1)x_{r+1}=\sum_{i=1}^{r}ix_i+(r+1)(n-\sum_{i=1}^{r}x_i)=(r+1)n+\sum_{i=1}^{r}(i-r-1)x_i\\&\leq (r+1)n-2\sum_{i=1}^{r}i=(r+1)n-r(r+1)   
\end{align*}
Let $f(r)=(r+1)n-r(r+1)$, we can see that $f(r)$ is maximized when $r=\frac{n-1}{2}$. Since $r$ is an integer, when $2\mid n$, $f(r)$ is maximized when $r=\frac{n-2}{2}$. Moreover, when $2\nmid n$, $f(r)$ is maximized obviously by $r=\frac{n-1}{2}$. Thus,
\begin{align*}
\sigma(S)\leq \begin{cases}
\frac{1}{4}(n^2+2n), &\text{$2\mid n$,}\\
\frac{1}{4}(n^2+2n+1), &\text{ $2\nmid n$.}
\end{cases}
\end{align*}
\end{proof}
\begin{lemma}\label{lm2}
Let $G$ be a graph on $n+s$ vertices, $S$ be a set of vertices in $G$ such that $|S|=s$ and each of the non terminal levels with respect to $S$ contains at least 2 vertices except the second level, which contains at least 3 vertices. Then 
\begin{align*}
\sigma(S)\leq \begin{cases}
\frac{1}{4}(n^2+8), &\text{ $2\mid n$,}\\
\frac{1}{4}(n^2+7), &\text{ $2\nmid n$.}
\end{cases}
\end{align*}
\end{lemma}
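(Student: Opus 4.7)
The plan is to follow the template of Lemma \ref{lm1} almost verbatim, adjusting only the bookkeeping that comes from the strengthened hypothesis on the second level. Let $r$ denote the number of non-terminal levels and $x_i$ the number of vertices in the $i$-th level, so that $\sum_{i=1}^{r+1} x_i = n$ and $\sigma(S) = \sum_{i=1}^{r+1} i\, x_i$. Substituting $x_{r+1} = n - \sum_{i=1}^{r} x_i$ gives
\[
\sigma(S) = (r+1)n - \sum_{i=1}^{r}(r+1-i)\,x_i,
\]
exactly as in the proof of Lemma \ref{lm1}.

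I would then focus on the main case $r \ge 2$, where the hypothesis says $x_1 \ge 2$, $x_2 \ge 3$ and $x_i \ge 2$ for $3 \le i \le r$. These lower bounds give
\[
\sum_{i=1}^{r}(r+1-i)\,x_i \;\ge\; 2r + 3(r-1) + 2\sum_{i=3}^{r}(r+1-i) \;=\; (r+1)^2 - 2,
\]
where the extra $+1$ in the weight on $x_2$ contributes precisely $r-1$ more than the bound used for Lemma \ref{lm1}. Consequently
\[
\sigma(S) \;\le\; (r+1)n - (r+1)^2 + 2,
\]
a quadratic in $r+1$ maximized at $r+1 = n/2$. The admissibility bound $n \ge 2+3+2(r-2)+1 = 2r+2$ forces $r \le (n-2)/2$: when $n$ is even I would plug in $r = (n-2)/2$ to obtain $(n^2+8)/4$, and when $n$ is odd the largest admissible integer $r = (n-3)/2$ gives $(n^2+7)/4$, matching the two cases of the lemma.

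Finally I would dispose of the degenerate cases $r \in \{0,1\}$, in which the ``except the second level'' clause is vacuous. For $r=0$ we have $\sigma(S)=n$, and for $r=1$ we have $\sigma(S) = x_1 + 2x_2 = 2n - x_1 \le 2n-2$; a direct numerical check shows both of these quantities are dominated by $(n^2+7)/4$ (respectively $(n^2+8)/4$) for all $n \ge 4$, so the stated bound remains valid.

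There is no real conceptual obstacle: the entire content is the weighted sum calculation. The only delicate point is to verify that the strengthened lower bound $x_2 \ge 3$ is actually available precisely when level $2$ is non-terminal (i.e.\ when $r \ge 2$), and to handle the small-$r$ boundary cases separately so that the optimization over $r$ is carried out over a correctly described integer domain.
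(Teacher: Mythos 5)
Your proposal is correct and follows essentially the same route as the paper: the same substitution $x_{r+1}=n-\sum_{i=1}^r x_i$, the same lower bound on the weighted sum (your $(r+1)^2-2$ equals the paper's $r(r+1)+r-1$), the same constraint $r\le\frac{n-2}{2}$, and the same integer optimization yielding $\frac{1}{4}(n^2+8)$ for even $n$ and $\frac{1}{4}(n^2+7)$ for odd $n$. Your explicit treatment of the degenerate cases $r\in\{0,1\}$ is a small point of extra care that the paper leaves implicit, but it does not change the argument.
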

\begin{proof}
Assume that there are $r$ non terminal levels and $x_i$ be the number of vertices in the $i-th$ level for $i=1,2,\dots,r+1$, thus $\sum_{i=1}^{r+1}x_i=n$. Since the terminal level contains at least 1 vertex, the second level contains at least 3 vertices and the other levels contain at least 2 vertices,  we get $r\leq \frac{n-2}{2}$. Also we have that 
\begin{align*}
\sigma(S)=&\sum_{i=1}^{r+1}ix_i=\sum_{i=1}^{r}ix_i+(r+1)x_{r+1}=\sum_{i=1}^{r}ix_i+(r+1)(n-\sum_{i=1}^{r}x_i)=(r+1)n+\sum_{i=1}^{r}(i-r-1)x_i\\&\leq (r+1)n-2\sum_{i=1}^{r}i-(r-1)=(r+1)n-r(r+1)-r+1   
\end{align*}
Let $f(r)=(r+1)n-r(r+1)-r+1$, we can see that $f(r)$ is maximized when $r=\frac{n-2}{2}$. Similarly, since $r$ is an integer, when $2\mid n$, $f(r)$ is maximized when $r=\frac{n-2}{2}$. Moreover, when $2\nmid n$, $f(r)$ is maximized by $r=\frac{n-3}{2}$. Thus,
\begin{align*}
\sigma(S)\leq \begin{cases}
\frac{1}{4}(n^2+8), &\text{ $2\mid n$,}\\
\frac{1}{4}(n^2+7), &\text{ $2\nmid n$.}
\end{cases}
\end{align*}
\end{proof}
Due to similarity with the proofs given in the previous two lemmas, we omit the proof of the next lemma.
\begin{lemma}\label{lm9}
Let $G$ be an $n+s$-vertex graph and $S$ be a set of vertices in $G$ such that $|S|=s$. If each of the non terminal levels with respect to $S$ contains at least 3 vertices, then 
\begin{align*}
\sigma(S)\leq\frac{1}{6}(n^2+3n+2).
\end{align*}
\end{lemma}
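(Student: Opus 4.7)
The plan is to mimic the proof of Lemmas \ref{lm1} and \ref{lm2}. Let $r$ be the number of non-terminal levels and $x_i$ be the number of vertices in the $i$-th level, so $\sum_{i=1}^{r+1} x_i = n$. Using the hypothesis $x_i \geq 3$ for $i=1,\dots,r$ and $x_{r+1} \geq 1$, I would first extract the counting inequality $3r + 1 \leq n$, i.e., $r \leq \frac{n-1}{3}$.

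Next, I would apply exactly the same telescoping rewrite
\begin{align*}
\sigma(S) = \sum_{i=1}^{r+1} i\,x_i = (r+1)n + \sum_{i=1}^{r}(i-r-1)x_i,
\end{align*}
and then use $x_i \geq 3$ together with $(i-r-1) \leq 0$ for $i \leq r$ to obtain
\begin{align*}
\sigma(S) \leq (r+1)n - 3\sum_{i=1}^{r}(r+1-i) = (r+1)n - \tfrac{3r(r+1)}{2} =: f(r).
\end{align*}

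The remaining task is to maximize $f(r)$ over non-negative integers $r$ with $r \leq \frac{n-1}{3}$. Differentiating shows the continuous maximum occurs at $r^{*} = \frac{2n-3}{6}$, which lies just below the constraint boundary $\frac{n-1}{3}$. I would then split into three cases according to $n \bmod 3$: for $n = 3k+1$ the maximizing integer is $r = k$ and a direct computation gives $f(k) = \frac{(k+1)(3k+2)}{2} = \frac{n^2+3n+2}{6}$ (this is the equality case); for $n = 3k$ the constraint forces $r \leq k-1$, and for $n = 3k+2$ the integer nearest $r^{*}$ in the feasible range is $r = k$, with the arithmetic in both cases yielding $f(r) \leq \frac{n^2+3n+2}{6}$.

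There is no conceptual obstacle beyond what already appears in Lemmas \ref{lm1} and \ref{lm2}; the only mildly delicate point is that the unconstrained maximum $\frac{(2n+3)^2}{24}$ of $f$ slightly exceeds the claimed bound, so one cannot simply plug in $r^{*}$, and the integrality/feasibility of $r$ must actually be used. Carrying out the three residue cases $n \equiv 0, 1, 2 \pmod 3$ to verify the inequality mechanically is what completes the argument, which is why the authors deem it similar enough to omit.
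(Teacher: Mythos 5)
Your proposal is correct and is exactly the argument the paper intends: the authors omit this proof as being ``similar to the previous two lemmas,'' and your reconstruction follows that same telescoping-plus-integer-optimization template, with the bound $r\leq\frac{n-1}{3}$, the estimate $\sigma(S)\leq(r+1)n-\frac{3r(r+1)}{2}$, and the case analysis modulo $3$ all checking out (equality occurring for $n\equiv 1,2\pmod 3$). No gaps.
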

\section{Proof of Theorem \ref{main}}
We proof Theorem \ref{main} by induction on the number of vertices $n$. As indicated in \cite{17} that Theorem \ref{main} holds for $n\leq 20$. Suppose that Theorem \ref{main} holds for all quadrangulation graphs with at most $n-1$ vertices. When $|V(G)|=n$, we distinguish 2 cases of the proof depending on the minimum degree of $G$.
\subsection*{Case 1: $\delta(G)=2$.}
Let $v$ be a vertex of degree 2 in $G$, there are 2 subcases, $n$ is even and $n$ is odd, separately.
\subsection*{Case 1.1: $2\mid n$.}
Deleting the vertex $v$, the resulting graph, $G-v$, is a quadrangulation graph on $n-1$ vertices. Obviously, $$W(G)\leq W(G-v)+\sigma(v).$$
By Lemma \ref{lm1}, since $2\nmid (n-1)$, $\sigma(v)\leq \frac{1}{4}\bigg((n-1)^2+2(n-1)+1\bigg)$ and by the induction hypothesis: $W(G-v)\leq \frac{1}{12}(n-1)^3+\frac{11}{12}(n-1)-1$.
Thus, 
$$W(G)\leq\bigg(\frac{1}{12}(n-1)^3+\frac{11}{12}(n-1)-1\bigg)+\frac{1}{4}\bigg((n-1)^2+2(n-1)+1\bigg)=\frac{1}{12}n^3+\frac{7}{6}n-2$$ 
and we are done.
\subsection*{Case 1.2: $2\nmid n$.}
Here we consider two subcases based on the number of vertices in the second level with respect to $S=\{v\}$.
\subsubsection*{Case 1.2.1: The second level contains at least 3 vertices.}
Similarly as Case 1.1, after deleting the vertex $v$, we get an $(n-1)$-vertex quadrangulation graph, $G-v$. Since $2\mid (n-1)$, by the induction hypothesis, we get $W(G-v)\leq \frac{1}{12}(n-1)^3+\frac{7}{6}(n-1)-2$. And since the second level contains at least 3 vertices, by Lemma \ref{lm2}, $\sigma(v)\leq \frac{1}{4}\bigg((n-1)^2+8\bigg)$.
Thus, $$W(G)\leq W(G-v)+\sigma(v)\leq \bigg(\frac{1}{12}(n-1)^3+\frac{7}{6}(n-1)-2\bigg)+\frac{1}{4}\bigg((n-1)^2+8\bigg)=\frac{1}{12}n^3+\frac{11}{12}n-1.$$
\subsubsection*{Case 1.2.2: The second level contains 2 vertices.}
Let $N(v)=\{x_1,x_2\}$, $x_3$ and $x_4$ be vertices in $G$ such that $vx_1x_3x_2v$ and $vx_1x_4x_2v$ are two 4-faces sharing the path $x_1vx_2$, see Figure \ref{19} $(a)$. The vertices $x_3$ and $x_4$ are in the second level with respect to $v$. Thus, $d_G(x_1)=d_G(x_2)=3$. When $n\geq 7$ (actually, Theorem \ref{main} holds for $n\leq 20$) we have cherries $x_3z_1x_4$ and $x_3z_2x_4$ such that $x_3z_1x_4x_1x_3$ and $x_3z_2x_4x_2x_3$ are 4-faces, for distinct vertices $z_1$ and $z_2$ in $G$. 
\begin{figure}[h]
\centering
\begin{tikzpicture}[scale=0.25]
\draw[fill=black](0,0)circle(8pt);
\draw[fill=black](-10,0)circle(8pt);
\draw[fill=black](10,0)circle(8pt);
\draw[fill=black](0,6)circle(8pt);
\draw[fill=black](0,-6)circle(8pt);
\draw[fill=black](0,0)circle(8pt);
\draw[fill=black](0,-12)circle(8pt);
\draw[fill=black](0,12)circle(8pt);
\draw[thick](-10,0)--(0,12)--(10,0)--(0,-12)--(-10,0)--(0,6)--(10,0)--(0,-6)--(-10,0)(0,-6)--(0,0)--(0,6);
\node at (-12,0){$x_3$};
\node at (12,0){$x_4$};
\node at (0,7){$x_2$};
\node at (0,-7){$x_1$};
\node at (0,13){$z_2$};
\node at (0,-13){$z_1$};
\node at (1,0){$v$};
\node at (0,-16){$(a)$};
\end{tikzpicture}\quad\quad
\begin{tikzpicture}[scale=0.25]
\draw[fill=black](0,0)circle(8pt);
\draw[fill=black](-11,0)circle(8pt);
\draw[fill=black](11,0)circle(8pt);
\draw[fill=black](0,0)circle(8pt);
\draw[fill=black](0,-12)circle(8pt);
\draw[fill=black](0,12)circle(8pt);
\draw[thick](-11,0)--(0,12)--(11,0)--(0,-12)(-11,0)--(0,0)--(11,0)(0,-12)--(-11,0);
\node at (-13,0){$x_3$};
\node at (13,0){$x_4$};
\node at (0,13){$z_2$};
\node at (0,-13){$z_1$};
\node at (0,-1){$x$};
\node at (0,-16){$(b)$};
\end{tikzpicture}
\caption{}
\label{19}
\end{figure}

Contracting edges $x_1v$ and $x_2v$ to a vertex $x$, see Figure \ref{19} $(b)$, results an $(n-2)$-vertex quadrangulation graph, say $G'$. Notice that in the graph $G'$, for any two  vertices $t_1, t_2\in V(G')\backslash\{x\} $, $d_{G'}(t_1,t_2)=d_G(t_1,t_2)$. But for any vertex $t\in V(G')\backslash \{x\}$, $d_G(t,v)=d_{G'}(t,x)+1$. We know that,
\begin{align*}
W(G)&=\sum_{\{u,w\}\subseteq V(G)}d_G(u,w)\\&=\sum_{u,w\in V(G)\backslash\{x_1,x_2\}}d_G(u,w)+\sum_{u\in V(G)\backslash\{x_2\}}d_G(u,x_1)+\sum_{u\in V(G)\backslash\{x_1\}}d_G(u,x_2)+d_G(x_1,x_2) \\
\end{align*}
and
\begin{align*}
\sum_{{u,w}\in V(G)\backslash\{x_1,x_2\}}d_G(u,w)&=\sum_{{u,w}\in V(G)\backslash\{x_1,x_2,v\}}d_G(u,w)+\sum_{u\in V(G)\backslash\{x_1,x_2\}}d_G(u,v)\\&=\sum_{{u,w}\in V(G')\backslash\{x\}}d_{G'}(u,w)+\sum_{u\in V(G')\backslash\{x\}}(d_{G'}(u,x)+1)\\&=\sum_{{u,w}\in V(G')}d_{G'}(u,w)+(n-3)\\&=W(G')+(n-3)
\end{align*}
Now we estimate $\sum\limits_{u\in V(G)\backslash\{x_2\}}d_G(u,x_1)+\sum\limits_{u\in V(G)\backslash\{x_1\}}d_G(u,x_2)+2$. Consider the set $S=\{x_3,x_4,z_1,z_2\}$ and the levels of vertices in $S'=V(G)\backslash \{x_1,x_2,x_3,x_4,v,z_1,z_2\}$ with respect to $S$. Notice that $|S'|=n-7$. Sine $2\nmid n$, then $2\mid (n-7)$. By Lemma \ref{lm1}, $$\sigma(S)\leq \frac{1}{4}\bigg((n-7)^2+2(n-7)\bigg).$$
Let $u\in S'$, then $d_G(u,x_1)+d_G(u,x_2)\leq 2d_G(S,u)+4$. Thus, 
\begin{align*}
\sum_{u\in S'}d_G(u,x_1)+\sum_{u\in S'}d_G(u,x_2)&\leq 2\sum_{u\in S'}d_G(S,u)+4(n-7)=2\sigma(S)+4(n-7)\\
&=\frac{1}{2}\bigg((n-7)^2+2(n-7)\bigg)+4(n-7).
\end{align*}
It can be checked that the sum of the distance of $x_i$, $i\in[2]$, to each of the vertices in $\{x_3,x_4,z_1,z_2,v\}$ is 12. Hence, 
\begin{align*}
\sum_{u\in V(G)\backslash\{x_2\}}d_G(u,x_1)+\sum_{u\in V(G)\backslash\{x_1\}}d_G(u,x_2)+2\leq \frac{1}{2}\bigg((n-7)^2+2(n-7)\bigg)+4(n-7)+14
\end{align*}
By the induction hypothesis, we get
\begin{align*}
W(G)&=W(G^')+(n-3)+\sum_{u\in V(G)\backslash\{x_2\}}d_G(u,x_1)+\sum_{u\in V(G)\backslash\{x_1\}}d_G(u,x_2)+2\\
&\leq \bigg(W(G')+(n-3)\bigg)+\bigg(\frac{1}{2}((n-7)^2+2(n-7))+4(n-7)+14\bigg) \\&\leq \bigg(\frac{1}{12}(n-2)^3+\frac{11}{12}(n-2)-1+(n-3)\bigg)+\bigg(\frac{1}{2}((n-7)^2+2(n-7))+4(n-7)+14\bigg)\\&=\frac{1}{12}n^3+\frac{11}{12}n-3
\end{align*}
Hence we are done in this case.
\subsection*{Case 2: $\delta(G)=3$.}
Let $v$ be a vertex of degree 3, $N(v)=\{v_1, v_2,v_3\}$ and $v_4,v_5,v_6$ be vertices in $G$ such that $v_1vv_3v_4v_1$, $v_2vv_3v_5v_2$ and $v_1vv_2v_6v_1$ are 4-faces, see Figure \ref{min3}. Notice that $v_4,v_5$ and $v_6$ are distinct, otherwise, $G$ contains a degree 2 vertex. Moreover, at least two of $e_1=\{v_1,v_5\}, e_2=\{v_2,v_4\}$ and $e_3=\{v_3,v_6\}$ are not in $E(G)$. If none of the three exists in $G$, we call the associated vertex $v$ as a "\textit{good vertex}". Observe that, deleting $v$ and adding one of these missed edges in $G$ result a quadrangulation graph with $n-1$ vertices.
Suppose that $e_1=\{v_1,v_5\}\notin E(G)$ and $G_1$ is the quadrangulation graph obtained form $G$ by deleting $v$ and adding $e_1$. Since this will decrease the distance of some pairs of vertices, we define the total sum of the decrease distance due to this operation on $G$ as following,
$$\text{dec}(G,G_1)=\sum_{u,w\in V(G)\backslash\{v\}}\bigg(d_G(u,w)-d_{G_1}(u,w)\bigg).$$

We need the following claim to complete the proof of this case.
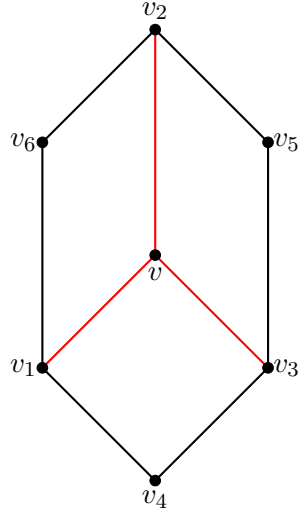
\begin{figure}[h]
\centering
\begin{tikzpicture}[scale=0.25]
\node at (0,-13){$v_4$};
\node at (7,-6){$v_3$};
\node at (7,6){$v_5$};
\node at (0,13){$v_2$};
\node at (-7,6){$v_6$};
\node at (-7,-6){$v_1$};
\node at (0,-1){$v$};
\draw[thick](0,-12)--(6,-6)--(6,6)--(0,12)--(-6,6)--(-6,6)--(-6,-6)--(0,-12);
\draw[red,thick](0,0)--(0,12)(0,0)--(-6,-6)(0,0)--(6,-6);
\draw[fill=black](0,0)circle(8pt);
\draw[fill=black](0,12)circle(8pt);
\draw[fill=black](0,-12)circle(8pt);
\draw[fill=black](-6,6)circle(8pt);
\draw[fill=black](6,-6)circle(8pt);
\draw[fill=black](-6,-6)circle(8pt);
\draw[fill=black](6,6)circle(8pt);
\end{tikzpicture}
\label{min3}
\caption{Structure about a degree 3 vertex of quadrangulation graph with $\delta(G)=3$}
\end{figure}

\begin{claim}\label{cl1}
Let $v$ be a good vertex and $G_i$ be a quadrangulation graph obtained from $G$ by deleting $v$ and adding the edge $e_i$ for $i\in[3]$. Then
\begin{displaymath}
\min_{i\in[3]} \{\text{dec}(G,G_i)\} \leq \frac{(n-1)^2}{18}
\end{displaymath}
\end{claim}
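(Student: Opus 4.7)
The plan is to reduce the claim to a sum bound via averaging and then bound the sum by $\frac{(n-1)^2}{6}$. Since each $\text{dec}(G,G_i)\geq 0$ (adding an edge to the same underlying graph $G-v$ can only decrease distances), we have $\min_{i\in[3]}\text{dec}(G,G_i)\leq \tfrac{1}{3}\sum_{i=1}^{3}\text{dec}(G,G_i)$, so it is enough to prove
\[
\sum_{i=1}^{3}\text{dec}(G,G_i)\leq\frac{(n-1)^2}{6}.
\]
The shape $\frac{(n-1)^2}{6}$ matches the bound of Lemma~\ref{lm9} applied to a $3$-vertex subset (remaining vertex count $n-4$), which is the clue that the sum should be rewritten, up to an $O(n)$ correction, as a status of such a set inside $G-v$.

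The first preparatory observation I would make is that $d_{G-v}(u,w)=d_G(u,w)$ for every $u,w\in V(G)\setminus\{v\}$. Any shortest $uw$-path in $G$ that visits $v$ uses a single segment of the form $v_\alpha v v_\beta$ with distinct $\alpha,\beta\in\{1,2,3\}$, and the $4$-face bounded by $v_\alpha v v_\beta v_{\alpha\beta}$ (with $v_{\alpha\beta}\in\{v_4,v_5,v_6\}$ the second common neighbour of $v_\alpha$ and $v_\beta$) yields a length-$2$ rerouting inside $G-v$ through $v_{\alpha\beta}$. Consequently $\text{dec}(G,G_i)=W(G-v)-W(G_i)$, so the task becomes bounding the total Wiener-index drop produced when one of the three diagonals $e_1,e_2,e_3$ is inserted into the hexagonal face of $G-v$ left by the deletion of $v$.

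For each pair $\{u,w\}\subseteq V(G-v)$, I would next carry out a per-pair decrease analysis. The triangle inequality gives the crude bound $\text{dec}_i(u,w)\leq d_{G-v}(a_i,b_i)-1\leq 2$, and placing $u,w$ on BFS-levels $a,b$ from $v$ shows that the $e_i$-shortcut has length at least $(a-1)+1+(b-2)=a+b-2$, so the extreme decrease $\text{dec}_i(u,w)=2$ requires $u$ to have the level-$1$ endpoint of $e_i$ as a BFS-predecessor and $w$ to reach $v$ via the level-$2$ endpoint of $e_i$ followed by one of its two neighbours in $N(v)$. Because the three chords use three distinct level-$1$ vertices and three distinct level-$2$ vertices, at most one chord can produce the extreme saving for a fixed pair, and the other two contribute at most $1$ each. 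Summing the per-pair bounds and identifying the total with, up to lower-order terms, a status function of a suitably chosen $3$-vertex subset $S\subseteq\{v_1,\ldots,v_6\}$ in $G-v$ should give $\sum_i\text{dec}(G,G_i)\leq\sigma_{G-v}(S)+O(n)$; Lemma~\ref{lm9} then bounds $\sigma_{G-v}(S)\leq\tfrac{1}{6}\bigl((n-4)^2+3(n-4)+2\bigr)\leq\tfrac{(n-1)^2}{6}$ for $n$ in the inductive range.

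The main obstacle will be the per-pair bookkeeping together with the verification of the hypothesis of Lemma~\ref{lm9}. On the per-pair side, one must rule out, or absorb into the $O(n)$ error, those configurations where two or three chords each contribute a saving of $1$ to the same pair, since such pairs could otherwise push the total above $\tfrac{(n-1)^2}{6}$. On the structural side, checking that every non-terminal BFS layer from $S$ in $G-v$ contains at least three vertices forces one to invoke Lemma~\ref{cd} and the planarity of $G$, together with the good-vertex assumption that $e_1,e_2,e_3\notin E(G)$, to exclude small separators in the quadrangulation around $v$. Pairs entirely inside the hexagon $\{v_1,\ldots,v_6\}$ contribute only an $O(1)$ correction but must also be tracked carefully.
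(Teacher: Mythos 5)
There is a genuine gap, and it occurs at the very first step: the reduction of the minimum to the average. The paper's proof partitions $V(G_1)$ into sets $A_1,\dots,A_6$ (vertices strictly closer to $v_j$ than to the other five hexagon vertices) plus a leftover set, shows that the only pairs whose distance can drop when $e_i$ is inserted have one endpoint in $A_{a_i}$ and the other in $A_{b_i}$ (where $e_i=\{v_{a_i},v_{b_i}\}$), each dropping by at most $2$, and then exploits the fact that the six sets are pairwise disjoint with $\sum_j|A_j|\le n-1$: the three products $|A_{a_i}||A_{b_i}|$ cannot all exceed $\left(\frac{n-1}{6}\right)^2$, since otherwise AM--GM would force $\sum_j|A_j|>n-1$. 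That disjointness of the three ``affected'' pair-sets is exactly what produces the constant $\frac{1}{18}$, and your averaging step $\min_i\text{dec}(G,G_i)\le\frac13\sum_i\text{dec}(G,G_i)$ discards it. The inequality your reduction requires, $\sum_i\text{dec}(G,G_i)\le\frac{(n-1)^2}{6}$, is not something you can hope to prove: if one diagonal, say $e_1$, splits $V(G-v)$ into two parts of size about $\frac{n-1}{2}$ with most cross pairs saving $2$, then $\text{dec}(G,G_1)$ alone is about $\frac{(n-1)^2}{2}$ (while the other two diagonals save nothing and the minimum is $0$). Nothing in your sketch excludes this configuration, and even if the sum bound $\frac{(n-1)^2}{2}$ were tight, averaging would only give $\frac{(n-1)^2}{6}$, not $\frac{(n-1)^2}{18}$. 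The minimum must be compared against the \emph{product structure}, not the sum.

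The second unsupported leap is the proposed identification of $\sum_i\text{dec}(G,G_i)$ with a status $\sigma_{G-v}(S)$ of a three-vertex set. The total decrease is a sum over $\Theta(n^2)$ pairs, each contributing up to $4$ by your own per-pair bound, whereas $\sigma_{G-v}(S)$ is a sum of $n-4$ single-vertex distances; no telescoping or double-counting argument is offered that converts the former into the latter, so Lemma \ref{lm9} (whose hypothesis of three vertices per level you would additionally need to verify) bounds a quantity of the wrong kind. Your preliminary observations ($d_{G-v}=d_G$ off $v$ via rerouting through the $4$-faces, and the per-pair decrease being at most $2$) are correct and consistent with the paper, but the argument needs to be restructured around the partition $A_1,\dots,A_6$ and the minimum of the three products $2|A_{a_i}||A_{b_i}|$ rather than around an average.
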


\begin{proof} Without loss of generality, assume that $G_1$ gives the minimum total sum of the decrease distance.
Let $S=\{v_1,v_2,v_3,v_4,v_5,v_6\}$ and $S_i=S\backslash\{v_i\}$, $i\in[6]$. Define $A_i=\{x\in V(G_1)\backslash S_i|\ d_{G_1}(v_i,x)<d_{G_1}(z,x), z\in S_i\}$ and $i\in[6]$. Let $A$ be the set of the remaining vertices, which means that $A=V(G_1)-\bigcup\limits_{i\in[6]}A_i$. Thus, $V(G_1)=A_1\cup A_2\cup\dots\cup A_6\cup A$. Now we show that, for any pair of vertices $\{u,w\}$ in $G_1$ with shortest path that must across the edge $e_1$, then one vertex is in $A_1$ and the other is in $A_5$. If not, there are 4 possibilities,
\begin{enumerate}
\item suppose one of $u$ or $w$ is not in $A_1$, $A_5$ and $A$, without loss of generality, let $w\in A_i$, $i\in\{2,3,4,6\}$. It can be easily checked that there exists a shortest path between $w$ and $u$ which does not across the edge $e_1$, then we find a contradiction.

Now suppose one of $u$ and $w$ is in $A$, there are two possibilities, without loss of generality, let $u\in A$  :
\item $d_{G_1}(v_i,u)=d_{G_1}(v_j,u)$, where $d(v_i,v_j)$ is odd, $i,j\in [6]$, this implies that there exists an odd cycle in $G_1$, which contradicts to the fact that every quadrangulation graphs are free of odd cycles. 
\item  $d_{G_1}(v_i,u)=d_{G_1}(v_j,u)$, where $d(v_i,v_j)$ is even, $i,j\in [6]$. It can be easily checked that there exists a shortest path between $w$ and $u$ which does not across the edge $e_1$, then we find a contradiction.
\item Now, there are only one case left, suppose $u$ and $w$ are both in $A_1$ or in $A_5$. In this case, it is easy to see that the shortest path can never use the edge $e_1$, which is a contradiction.
\end{enumerate}
Therefore, a pair of vertices with shortest paths must across the edge $e_1$ are those pairs that one is in $A_1$, the other one is in $A_5$. Since $\text{dec}(G,G_1)=\min\limits_{i\in[3]} \{\text{dec}(G,G_i)\}$, $\text{dec}(G,G_1)$ is maximized when $|A_1|=|A_2|=\dots=|A_6|=\frac{n-1}{6}$. Notice that for such pair of vertices, the decreased distance is at most 2. Therefore, $$\text{dec}(G,G_1)\leq 2\bigg(\frac{n-1}{6}\bigg)\bigg(\frac{n-1}{6}\bigg)=\frac{(n-1)^2}{18}.$$
\end{proof}

Now we continue the proof of the case by considering two subcases based on the existence of separating 4-cycle in $G$.
\subsubsection*{Case 2.1: No separating 4-cycle in $G$.} 
Since there is no separating 4-cycle in $G$, by Lemma \ref{cd}, $G$ is 3-connected, then each degree 3 vertex is a good vertex. Take a degree 3 vertex $v\in V(G)$, without loss of generality, deleting $v$ and adding $e_1$, denoted $G_1$, gives the minimum decrease distance sum, which is less than $\frac{(n-1)^2}{18}$ based on Claim \ref{cl1}. Also since $G$ is 3-connected, each of the non terminal level with respect to a vertex $v$ contains at least 3 vertices, otherwise, there exists a 2-vertex cut in $G$. By Lemma \ref{lm9}, we have
$$\sigma_G(v)\leq\frac{1}{6}\bigg((n-1)^2+3(n-1)+2\bigg).$$
By the induction hypothesis, we get
\begin{align*}
W(G)&\leq W(G_1)+\sigma_G(v)+\text{dec}(G,G_1)\\
&\leq W(G_1)+\sigma_G(v)+\frac{(n-1)^2}{18}\\
&\leq \bigg(\frac{1}{12}(n-1)^3+\frac{7}{6}(n-1)-2\bigg)+\frac{1}{6}\bigg((n-1)^2+3(n-1)+2\bigg)+\frac{(n-1)^2}{18}\\&=\frac{n^3}{12}-\frac{n^2}{36}+\frac{53n}{36}-\frac{115}{36}
\end{align*}
It can be checked that $\frac{n^3}{12}-\frac{n^2}{36}+\frac{53n}{36}-\frac{115}{36}<\frac{1}{12}n^3+\frac{11}{12}n-1$ for $n\geq 15$, so the induction is settled in this case.
\begin{definition}
A \textit{minimum separating 4-cycle} in $G$ is a separating 4-cycle which contains no separating 4-cycle interior.
\end{definition}
\subsubsection*{Case 2.2: G contains a separating 4-cycle.}
Let $S=\{z_1,z_2,z_3,z_4\}$ be a minimum separating 4-cycle in $G$ with minimum number of vertices in the interior. Let  $x$ and $n-x-4$ be the number of vertices of the interior and exterior of $S$, respectively. Clearly, $x\geq 4$ and $n-x-4\geq 4$, otherwise, $G$ contains a vertex with degree 2. Removing the interior $x$ vertices of $S$ results in a quadrangulation graph, say $G_{n-x}$ on $n-x$ vertices. Removing the exterior $n-x-4$ vertices of $S$ results in a 3-connecetd quadrangulation graph, say $G_{x+4}$ on $x+4$ vertices. Obviously, we have
$$W(G)\leq W(G_{x+4})+ W(G_{n-x})-8+\sum_{w\in V(G_{n-x})\backslash {S}}\sum_{u\in v(G_{x+4})\backslash {S}}d(u,w),$$
here $W(S)$ be double counted results in $-8$, 
and 
\begin{align*}
\sum_{w\in V(G_{n-x})\backslash {S}}\sum_{u\in v(G_{x+4})\backslash {S}}d(u,w)\leq x\sigma_{G_{n-x}}(S)+(n-x-4)\bigg(\text{max}\{\sigma_{G_{x+4}}(z_i)~|~i\in [4]\}-4\bigg),
\end{align*}
here $\sum\limits_{j\neq i,~i,j\in [4]}d(z_i,z_j)$ be included in $\text{max}\{\sigma_{G_{x+4}}(z_i)~|~i\in [4]\}$ results in $-4$ .

Since $G_{x+4}$ is 3-connected, also by Lemma \ref{lm9}, we have $$\text{max}\{\sigma_{G_{x+4}}(z_i)~|~i\in [4]\}\leq \frac{1}{6}\bigg((x+3)^2+3(x+3)+2\bigg).$$
Thus, by the induction hypothesis, Lemmas \ref{lm1} and \ref{lm9}, we get
\begin{align*}
W(G)&\leq W(G_{x+4})+ W(G_{n-x})-8+\sum_{w\in V(G_{n-x})\backslash {S}}\sum_{u\in v(G_{x+4})\backslash {S}}d(u,w)\\
&\leq \bigg(\frac{1}{12}(x+4)^3+\frac{7}{6}(x+4)-2\bigg)+\bigg(\frac{1}{12}(n-x)^3+\frac{7}{6}(n-x)-2\bigg)-8\\
&+\frac{x}{4}
\bigg((n-x-4)^2+2(n-x-4)+1\bigg)+(n-x-4)\bigg(\frac{1}{6}\bigg((x+3)^2+3(x+3)+2\bigg)-4\bigg)\\
&=\frac{n^3}{12}-\frac{nx^2}{12}+\frac{n}{2}+\frac{x^3}{12}+\frac{x^2}{3}+\frac{11x}{12}+\frac{2}{3}
\end{align*}
It can be checked that the inequality $\frac{n^3}{12}-\frac{nx^2}{12}+\frac{n}{2}+\frac{x^3}{12}+\frac{x^2}{3}+\frac{11x}{12}+\frac{2}{3}\leq \frac{1}{12}n^3+\frac{11}{12}n-1$ holds for $x\geq 4$, except when $x=4$ and $n=9$ which implies that $G_{n-x}\setminus S$ contains only one degree 2 vertex, this contradicts the fact that $\delta (G)=3$.
And the proof of the Theorem \ref{main} is complete.


\begin{thebibliography}{1}
\bibitem{21}G. Brinkmann, S.Greenberg, C. Greenhill, B. D. McKay, R. Thomas, P. Wollan, Generation of simple quadrangulations of the sphere, 305 (1-3) (2005), 33-54.
\bibitem{8}  R. A. Beezer, J.E. Riegsecker, B.A. Smith, Using minimum degree to bound average distance, Discrete Math., 226 (1-3) (2001) 365–371.

\bibitem{17} \'E. Czabarka, P. Dankelmann, T. Olsen, L.A. Sz\'ekely, Wiener Index and Remoteness of Planar Triangulations and Quadrangulations., Manuscript, arXiv:1905.06753v1, 2019.

\bibitem{9}  P. Dankelmann, R.C. Entringer, Average distance, minimum degree and spanning trees, J. Graph
Theory, 33 (1) (2000), 1–13.
\bibitem{13} P. Dankelmann, S. Mukwembi, H.C. Swart, Average distance and edge-connectivity II, SIAM J. Discrete Math., 21 (2008), 1035–1052.
\bibitem{14}  P. Dankelmann, S. Mukwembi, H.C. Swart, Average distance and edge-connectivity I, SIAM J. Discrete Math., 22 (2008), 92–101.
\bibitem{12} O. Favaron, M. Kouider, M. Mah\'eo, Edge-vulnerability and mean distance, Networks, 19 (1989), 493–504.
\bibitem{15} M. Fischermann, A. Hoffmann, D. Rautenbach, L. Sz\'ekely and L. Volkmann. Wiener index versus
maximum degree in trees, Discrete Appl. Math., 122 (1-3) (2002) 127–137.
\bibitem{22}D. Ghosh, E. Gy\H{o}ri, A. Paulos, N.Salia, O.Zamora, The Maximum Wiener Index of Maximal Planar Graphs, Manuscript, arXiv:1912.02846v1, 2019.
\bibitem{10} F. Harary, Status and contrastatus, Sociometry, (1959), 23–43.
\bibitem{11}  M. Kouider, P. Winkler, Mean distance and minimum degree, J. Graph Theory, 25 (1) (1997): 95–99.
\bibitem{7} L. Lov\'asz, Combinatorial Problems and Exercises, North Holland, (1979).

\bibitem{6} J. Plesn\'ik, On the sum of all distances in a graph or digraph, J. Graph Theory, 8 (1984), 1–24.

\bibitem{1}  H. Wiener. Structural determination of paraffin boiling points, J. Amer. Chem. Soc.,
69 (1947) 17–20.


\end{thebibliography}
\end{document}